\newcommand{\Haus}{\dim_{\mathrm{H}}}
\newtheorem*{thm*}{Theorem}
\newtheorem*{conj*}{Conjecture}
\newtheorem*{ques*}{Question}
\newtheorem*{rem*}{Remark}
\newtheorem*{defn*}{Definition}
\newtheorem*{mainques*}{Main questions}
\newtheorem{thm}{Theorem}[section]
\newtheorem{lma}[thm]{Lemma}
\newtheorem{rem}[thm]{Remark}
\def\RR{\mathbb{R}}
\def\QQ{\mathbb{Q}}
\def\ZZ{\mathbb{Z}}
\def\NN{\mathbb{N}}
\def\supp{\mathrm{supp}}
\newcommand{\cbr}[1]{\left\{ {#1} \right\}}
\def\one{\mathbf{1}}
\begin{document}
\title{Non-Salem sets in metric Diophantine approximation}


\author{Kyle Hambrook}
\address{Kyle Hambrook\\
	Department of Mathematics and Statistics\\San Jose State University\\One Washington Square\\CA 95192\\USA }
\curraddr{}
\email{kyle.hambrook@sjsu.edu}
\thanks{}

\author{Han Yu}
\address{Han Yu\\
	Department of Pure Mathematics and Mathematical Statistics\\University of Cambridge\\CB3 0WB \\ UK }
\curraddr{}
\email{hy351@cam.ac.uk}
\thanks{}

\subjclass[2010]{11Z05, 11J83, 28A80}

\keywords{}

\begin{abstract}
A classical result of Kaufman states that, for each $\tau>1,$ 
the set of well approximable numbers 
\[
E(\tau)=\{x\in\mathbb{R}: \|qx\| < |q|^{-\tau} \text{ for infinitely many integers q}\}
\] 
is a Salem set with Hausdorff dimension $2/(1+\tau)$.  
A natural question to ask is whether the same phenomena 
holds for well approximable vectors in $\mathbb{R}^n.$ 
We prove that this is in general not the case. 
In addition, we also show that in $\mathbb{R}^n, n\geq 2,$ the set of badly approximable vectors is not Salem.
\end{abstract}

\maketitle

\maketitle
\allowdisplaybreaks
\section{Introduction}
In this paper,  we consider a problem on Fourier dimensions of various sets from metric Diophantine approximation.  
First, we introduce the notion of Fourier dimension.

\begin{defn*}
The Fourier dimension of a Borel probability measure $\mu$ on $\RR^n$ is 
	$$
	\dim_F \mu 
	= 
	\sup \cbr{ 
		s \in [0,n] : 
		|\widehat{\mu}(\xi)| \ll |\xi|^{-s/2} 
	}
	,$$
	where $\hat{\mu}$ is the Fourier transform of $\mu,$ i.e., 
	\[
	\hat{\mu}(\xi)=\int e^{-2\pi i x \cdot \xi}d\mu(x) \quad \text{for $\xi \in \RR^n$}. 
	\]
The Fourier dimension of a set $E \subset \RR^n$ is 
$$
\dim_F E = \sup \cbr{\dim_F \mu : \mu \in \mathcal{P}(E) }, 
$$
where $\mathcal{P}(E)$ is the set of all Borel probability measures $\mu$ on $\RR^n$ 
for which $\supp(\mu)$ 
is a compact subset of $E$. 
(Recall that $\supp(\mu)$ is the largest closed set $C$ for which $\mu(\RR^n \setminus C) = 0$.)  
\end{defn*}	



\begin{rem*}
	Our definition for Fourier dimension is from Matilla \cite[Chapter 3]{Ma2}. 
	There are other versions of Fourier dimension in the literature, not all of which are equivalent. 
	See the discussion at the end of \cite[Page 40]{Ma2}.
\end{rem*}
Note that, if $E$  is a Borel subset of $\RR^n$
\[
\Haus E \geq \dim_F E
\]
where $\Haus E$ is the Hausdorff dimension of $E.$ 
(In this paper, we do not need the definition of Hausdorff dimension, 
but the interested reader can find more details in \cite{Fa} and \cite{Ma1}.)  
It is an interesting problem to find sets which achieve the equality, i.e. $\Haus E =\dim_F E.$ 
\begin{defn*}
	A Salem set is a Borel subset of $\RR^n$ with $\Haus E=\dim_F E.$ 
\end{defn*}
There is no lack of Salem sets. In fact, "most" randomly constructed sets are Salem, see \cite{K85} \footnote{However, some naturally defined random sets are not Salem. See \cite{F1}.\cite{F2}.}.
On the other hand, it is considered to be a challenging problem to find non-random Salem sets with non-integer dimensions.\footnote{With integer dimensions, it is in general easy to find examples. 
For example, consider hyper-surfaces with non-vanishing curvatures.} 
This was first done by Kaufman in \cite{K81} on $\mathbb{R}$ with the following set. 
\begin{defn*}
For $\tau>0$, the set $E(\tau)$ is defined to be 
\[
E(\tau)=\{ x \in \mathbb{R} : \| qx \| < |q|^{-\tau} \text{ for infinitely many } q \in \ZZ \},
\]
where $\|x\|$ is the distance between the real number $x$ and the set of integers.  
Elements of $E(\tau)$ are called well approximable numbers (or, more precisely, $\tau$-well approximable numbers). 
\end{defn*}
\begin{thm*}[Kaufman]
Let $\tau>1.$ The set $E(\tau) \subset \mathbb{R}$ is Salem. 
\end{thm*}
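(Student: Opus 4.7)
\emph{Proof plan.} The inequality $\dim_F E(\tau) \leq \Haus E(\tau)$ holds for every Borel set, and the Jarn\'ik--Besicovitch theorem gives $\Haus E(\tau) = 2/(1+\tau)$; so the content of the statement is the lower bound $\dim_F E(\tau) \geq 2/(1+\tau)$. For every $s < 2/(1+\tau)$ I would construct a Borel probability measure $\mu$ whose support is a compact subset of $E(\tau)$ and whose Fourier transform satisfies $|\hat\mu(\xi)| \ll |\xi|^{-s/2}$.

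The basic building blocks are one-scale bump approximants placed near rationals. Fix a smooth nonnegative $\phi$ with $\supp\phi \subset [-1,1]$ and, for each large integer $N$, define
\[
f_N(x) \;=\; \sum_{q=N}^{2N}\ \sum_{p \,:\, p/q \in [0,1]} \phi\!\left(q^{1+\tau}(x - p/q)\right),
\]
and let $\mu_N$ be its normalisation to a probability density. Its support lies in
\[
A_N \;=\; \bigcup_{q \in [N,2N]} \bigcup_p B\!\left(p/q,\, q^{-(1+\tau)}\right) \;\subset\; \bigl\{\, x : \|qx\| < q^{-\tau} \text{ for some } q \in [N,2N]\,\bigr\}.
\]
A direct computation gives
\[
\hat f_N(\xi) \;=\; \sum_{q=N}^{2N} q^{-(1+\tau)}\, \hat\phi\!\left(\frac{\xi}{q^{1+\tau}}\right) \sum_{p} e^{-2\pi i \xi p/q}.
\]

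The key step, and the main obstacle, is to upgrade this identity to a uniform Fourier bound $|\hat\mu_N(\xi)| \ll |\xi|^{-s/2}$ valid for all $\xi$ and all $N$. I would split according to the size of $|\xi|$ relative to $N^{1+\tau}$. When $|\xi|$ is much smaller than $N^{1+\tau}$, Schwartz decay of $\hat\phi$ restricts the contribution to $\xi$ near integer multiples of the scale $q^{1+\tau}$; when $|\xi|$ is comparable to or larger than $N^{1+\tau}$, one must control the arithmetic sum $\sum_{(p,q)=1} e^{-2\pi i \xi p/q}$, which is a Ramanujan sum bounded by $\gcd(\xi,q)$, and sum over $q \in [N,2N]$ using divisor-type estimates. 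Combining Schwartz decay in one regime with the cancellation of these exponential sums in the other yields the required bound for every $s < 2/(1+\tau)$; this is the arithmetic-analytic heart of the argument.

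Finally, $\mu_N$ lives on $A_N$ rather than on $E(\tau)$, so I would pass from the one-scale objects to the $\limsup$ by a Cantor-type construction along a very rapidly growing lacunary sequence $N_1 \ll N_2 \ll \cdots$. At stage $k$ one refines the previous approximant by inserting, inside each component of its support, bumps drawn from $\mu_{N_k}$ at the finer scale $N_k^{-(1+\tau)}$; provided the $N_k$ grow quickly enough, the resulting probability measures converge weakly to a limit $\mu$ whose support lies in $\bigcap_{n} \bigcup_{k \geq n} A_{N_k} \subset E(\tau)$, and the uniform Fourier decay from the previous step transfers to $\mu$. Letting $s \nearrow 2/(1+\tau)$ completes the proof that $E(\tau)$ is Salem.
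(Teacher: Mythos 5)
The paper does not prove Kaufman's theorem; it is stated as a known background result with a citation to \cite{K81}, and the paper's own contribution begins afterward. So there is no in-paper proof to compare your sketch against, and I assess it on its own terms.

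Your outline is the standard strategy, and essentially the one Kaufman used: build one-scale approximants as sums of bumps at rationals with denominators in a dyadic window, compute $\hat f_N$, exploit cancellation in the resulting arithmetic exponential sums, and pass to a measure supported in the limsup set by a nested Cantor-type construction along a rapidly growing sequence of scales. The skeleton is sound. Three details you gloss over deserve flags. First, your displayed $f_N$ sums over all $p$ with $p/q\in[0,1]$, but your analysis of the arithmetic sum presumes $(p,q)=1$ so that it is a Ramanujan sum $c_q(\xi)$; you should commit to one. Kaufman himself restricts to prime $q$, which makes $c_q(\xi)\in\{-1,q-1\}$ and avoids divisor losses; the all-$q$ variant leaks a factor $d(\xi)$ after normalisation, which is harmless because one only needs the bound $|\hat\mu(\xi)|\ll|\xi|^{-s/2}$ for each fixed $s<2/(1+\tau)$, but this must be said. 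Second, $\xi$ is a real variable, so $\sum_p e^{-2\pi i\xi p/q}$ is a Dirichlet kernel rather than literally a Ramanujan sum; the large values near integer multiples of $q$ have to be absorbed by the Schwartz decay of $\hat\phi$, which is exactly your regime split but needs to be carried out. Third, the transfer of the single-scale bound on $\hat\mu_{N_k}$ to the weak limit $\mu$ is not automatic: it requires a telescoping estimate comparing $\hat\mu_{N_{k+1}}$ to $\hat\mu_{N_k}$ on the frequency range where the latter controls things, and this is where the super-exponential growth of $N_k$ is actually used. None of these is a fatal gap, but as written the sketch identifies the ideas without establishing any of the estimates.
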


Kaufman's result hinted an approach of find explicit, non-random Salem sets in $\mathbb{R}^n, n\geq 2$.  A plausible guess is the following construction in metric number theory.
\footnote{In fact, the task of finding Salem sets in $\mathbb{R}^n, n\geq 2$ with any given Hausdorff dimension was only resolved very recently; see \cite{HF19}. For $n=2$, see also \cite{H17}.}
\begin{defn*}
	Let $d, n \geq 1$ be integers and let $\tau>0$.  
	The set $E(\tau,d,n)$ is defined to be 
	\[
	E(\tau,d,n)=\{x\in\mathbb{R}^{nd}: |xq-r|_\infty<|q|_\infty^{-\tau}\text{ for infinitely many } (q,r)\in\mathbb{Z}^d\times\mathbb{Z}^n\},
	\]
	where $x \in \mathbb{R}^{nd}$ is considered to be an $n\times d$ matrix and $| \cdot |_\infty$ is the maximal norm in Euclidean spaces. 
	Elements of $E(\tau,d,n)$ are called well approximable vectors (or, more precisely, $\tau$-well approximable vectors). 
\end{defn*}
It is known (\cite{BD86}) that, when $\tau>d/n$, 
\[
\Haus E(\tau,d,n)=n(d-1)+\frac{n+d}{\tau+1}.
\] 
It is unknown whether any of the sets $E(\tau,d,n),\tau>d/n$ is Salem. From Kaufman's result, it is tempting to believe that these sets should be Salem. 
\begin{ques*}
	Let $d,n\geq 1$ be integers and $\tau>d/n.$ Is $E(\tau,d,n)$ a Salem set?
\end{ques*}
This question was raised (in a more general form) in \cite{H19}. 
In the present paper, we answer this question. 
\begin{thm}\label{main1}
		Let $d,n\geq 1$ be integers. Then for each $\tau>d/n,$
		\[
		\dim_F E(\tau,d,n)=\frac{2d}{1+\tau}.
		\]
		In particular, $E(\tau,d,n)$ is not Salem unless $n=d=1.$
\end{thm}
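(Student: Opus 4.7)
The plan is to prove $\dim_F E(\tau,d,n)\le 2d/(1+\tau)$ and the matching lower bound separately. The upper bound will come from a Kaufman-style Fourier--Borel--Cantelli argument; the lower bound requires a direct Kaufman-type construction of a measure on $E(\tau,d,n)$ with controlled Fourier decay in $\RR^{nd}$.

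For the upper bound, suppose $\mu$ is a compactly supported probability measure on $E(\tau,d,n)$ with $|\hat\mu(\xi)|\le C|\xi|^{-s/2}$ for all nonzero $\xi$. For $q\in\ZZ^d\setminus\{0\}$, write
\[
A_q=\{x\in\RR^{nd}:|xq-r|_\infty<|q|_\infty^{-\tau}\text{ for some }r\in\ZZ^n\},
\]
so $E(\tau,d,n)=\limsup A_q$. Fix a Schwartz $\phi\ge 0$ on $\RR^n$ with $\phi\ge\one_{[-1,1]^n}$ and $\supp\hat\phi\subset B(0,1)$, and consider the periodization
\[
\Phi_q(x):=\sum_{r\in\ZZ^n}\phi\bigl(|q|^\tau(xq-r)\bigr)=|q|^{-n\tau}\sum_{|m|\le|q|^\tau}\hat\phi(m/|q|^\tau)\,e^{2\pi im\cdot xq}\ge\one_{A_q}(x).
\]
Using $\int e^{2\pi im\cdot xq}d\mu=\hat\mu(-m\otimes q)$ (with $(m\otimes q)_{ij}=m_iq_j\in\RR^{nd}$), the $m=0$ contribution is $O(|q|^{-n\tau})$ and the remainder is bounded via the decay hypothesis by $O(|q|^{-s(1+\tau)/2})$. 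This yields $\mu(A_q)\le\int\Phi_q\,d\mu\lesssim|q|^{-n\tau}+|q|^{-s(1+\tau)/2}$. Since $\mu(\limsup A_q)=1$, Borel--Cantelli forces $\sum_q\mu(A_q)=\infty$; the hypothesis $\tau>d/n$ makes $\sum_q|q|^{-n\tau}<\infty$, so $\sum_q|q|^{-s(1+\tau)/2}=\infty$, giving $s\le 2d/(1+\tau)$.

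For the lower bound, the plan is to construct a probability measure $\mu$ on $E(\tau,d,n)$ with $|\hat\mu(\xi)|\lesssim_\epsilon|\xi|^{-d/(1+\tau)+\epsilon}$ for every $\epsilon>0$. At each dyadic scale $N$ form a normalized average $\mu_N$ of the slab measures $\one_{A_q\cap[0,1]^{nd}}/|A_q|$ over $q\in T_N:=\{q\in\ZZ^d:|q|_\infty\asymp N\}$. The Fourier transform of a single normalized slab is computable via a linear change of coordinates separating the $n$ directions transverse to the slab from its $n(d-1)$-dim fiber; the result peaks on the rank-one frequencies $\xi=m\otimes q$ and decays off these directions at the rate dictated by the fiber geometry. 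Averaging over the $\asymp N^d$ values of $q\in T_N$ should smear the peaks across all directions in $\RR^{nd}$ and give a uniform bound $|\hat{\mu_N}(\xi)|\lesssim_\epsilon|\xi|^{-d/(1+\tau)+\epsilon}$. A Cantor-type iteration along a sparse sequence $N_k\to\infty$ then produces a weak limit $\mu$ supported on $\limsup A_q=E(\tau,d,n)$ and inheriting this decay.

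The main obstacle is the lower bound: along the rank-one rays $\{t(m\otimes q):t\in\RR\}$ the individual slab Fourier transforms do not decay, so the required decay of $\hat{\mu_N}$ in those directions must come entirely from cancellation in the sum over $q\in T_N$. Making this cancellation quantitative requires a careful analysis of the Dirichlet-type sums arising from summing over $r\in\ZZ^n$ and $q\in T_N$, and then propagating the estimates to the weak limit through the Cantor-type iteration. The upper-bound argument above is routine by comparison, once the Fourier--Borel--Cantelli framework is in place.
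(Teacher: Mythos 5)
Your upper-bound argument is correct and follows the same strategy as the paper: Poisson-sum a Schwartz majorant of the near-resonance set $A_q$, use the assumed decay of $\widehat{\mu}$ to control the nonzero-frequency terms, and apply the convergence half of Borel--Cantelli. The paper packages the same computation into a lattice-point counting lemma (Theorem~\ref{thm:linearform}), chooses $\delta=|q|_\infty^{-\tau'}$ for $\tau'$ slightly below $\tau$, and argues by contradiction; your contrapositive phrasing ($\sum_q\mu(A_q)=\infty$ forces $s(1+\tau)/2\le d$) is slightly cleaner but identical in substance. Two small points worth stating explicitly: (i) the estimate $\sum_{0<|m|\le|q|^\tau}|m|^{-s/2}\asymp(|q|^\tau)^{n-s/2}$ uses $s/2<n$, which is available because $\tau>d/n$ gives $2d/(1+\tau)<2n$; (ii) you need $\supp\mu$ to be a compact subset of $E(\tau,d,n)$ to conclude $\mu(\limsup_q A_q)=1$, which the paper's definition of $\mathcal{P}(E)$ guarantees.

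The lower bound, however, is a genuine gap, and it is the larger half of the theorem. You have correctly diagnosed the obstruction: each slab $A_q$ contributes Fourier mass concentrated along the rank-one frequencies $t(m\otimes q)$ with no decay along those rays, so all the decay of $\widehat{\mu_N}$ must come from cancellation when averaging over $q\in T_N$, and then survive a Cantor-type iteration. But ``a careful analysis of the Dirichlet-type sums'' is not a deferred technicality --- making that cancellation quantitative and uniform in $\xi$ is exactly the content of Hambrook's construction, which this paper does not re-prove: it simply quotes the inequality $\dim_F E(\tau,d,n)\ge 2d/(1+\tau)$ as Theorem~\ref{lower}, citing \cite{H19}. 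As written, your proposal establishes only $\dim_F E(\tau,d,n)\le 2d/(1+\tau)$; to get the stated equality you must either carry out those exponential-sum estimates in full or cite the known measure. The ``in particular'' clause additionally requires comparing $2d/(1+\tau)$ with $\Haus E(\tau,d,n)=n(d-1)+(n+d)/(1+\tau)$ from \cite{BD86}, an elementary check you omit.
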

\begin{rem}
For Borel sets, Fourier dimension is in general trickier to deal with than Hausdorff dimension. 
For example, it is in general not easy to confirm that a suspected non-Salem set is actually not Salem. 
To do this, one needs to check the Fourier decay property for all Borel probability measures supported in the set! 
Luckily, this turns out to be not too hard for our problem. 
\end{rem}   

We can also prove a result on Fourier dimension of sets badly approximable vectors. 
\begin{defn*}
	Let $n \geq 1$ be an integer. 
	The set of badly approximable vectors in $\RR^n$ is 
	\[
	Bad(n)=\{x\in\mathbb{R}^n: \exists c(x) > 0, \forall q \in \NN, \|q x\| \geq c(x) q^{-1/n}\}.
	\]
	For $\epsilon>0$, let $Bad(n,\epsilon)$ be 
	\[
	Bad(n,\epsilon)=\{x \in \mathbb{R}^n: \forall q \in \NN,  \|q x\| \geq \epsilon q^{-1/n}\}.
	\] 
\end{defn*}
It is known that the set of badly approximable vectors has full Hausdorff dimension, i.e., $\Haus Bad(n)=n$ for all $n\geq 1.$ 
It is also known that $\dim_F Bad(1)>0.$ 
In fact, $Bad(1,\epsilon)>0$ for all small enough $\epsilon$. 
See for example \cite{K81b}, \cite{JS} and \cite{SS}. 
In this paper, we will prove the following theorem.
\begin{thm}\label{thm:bad}
	Let $n\geq 1$ be an integer. Then we have
	\[
	\dim_F Bad(n) \leq \frac{2n}{n+1}.
	\]
\end{thm}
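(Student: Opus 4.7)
The plan is to reduce to the case where the measure is supported on a single uniformly badly approximable set $Bad(n,\epsilon)$, where a Poisson summation argument gives the bound directly; the reduction itself is carried out by Baire category combined with a smoothing trick that preserves Fourier decay.

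For the uniform step, suppose $\mu$ is a probability measure with compact support contained in $Bad(n,\epsilon)$ for some $\epsilon>0$, and $|\hat\mu(\xi)|\ll|\xi|^{-s/2}$. Take a nonnegative $\phi\in C_c^\infty([-1,1]^n)$ with $\int\phi>0$ and form the periodisation $F_\delta(y)=\sum_{r\in\ZZ^n}\phi((y-r)/\delta)$. For $\delta<1/2$ we have $F_\delta\leq\mathbf{1}_{\{\|y\|_\infty\leq\delta\}}$, while Poisson summation together with the Schwartz decay of $\hat\phi$ yields
\begin{equation*}
\mu(\{x:\|qx\|_\infty\leq\delta\})\;\geq\;\int F_\delta(qx)\,d\mu(x)\;\geq\;c_1\delta^n-c_2\delta^{s/2}q^{-s/2}.
\end{equation*}
But $\mu$ lives on $Bad(n,\epsilon)$, so the left side is zero as soon as $\delta<\epsilon q^{-1/n}$. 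Letting $\delta\uparrow\epsilon q^{-1/n}$ and rearranging forces $q^{s(n+1)/(2n)-1}\leq C(\epsilon)$ for every $q$, and letting $q\to\infty$ gives $s\leq 2n/(n+1)$.

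For the reduction step, let $\mu\in\mathcal{P}(K)$ with $K\subset Bad(n)$ compact and $|\hat\mu(\xi)|\ll|\xi|^{-s/2}$. Write $\supp\mu=\bigcup_{m\geq 1}(\supp\mu\cap Bad(n,1/m))$ as an increasing union of sets closed in $\supp\mu$. Each $Bad(n,1/m)$ is closed in $\RR^n$ as a countable intersection of closed sets, and $\supp\mu$ is a complete metric space, so Baire's theorem produces an index $m_0$ and a nonempty open ball $V\subset\RR^n$ with $V\cap\supp\mu\subseteq Bad(n,1/m_0)$; in particular $\mu(V)>0$. Choose $\chi\in C_c^\infty(V)$ with $\int\chi\,d\mu>0$ and set $\nu=\chi\mu$. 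Then $\supp\nu\subseteq V\cap\supp\mu\subseteq Bad(n,1/m_0)$, and the identity $\hat\nu=\hat\chi*\hat\mu$, together with the standard split of the convolution into the regions $|\zeta|\leq|\xi|/2$ and $|\zeta|>|\xi|/2$, shows that $|\hat\nu(\xi)|\ll|\xi|^{-s/2}$. Normalising $\nu$ to a probability measure and applying the uniform step to it yields $s\leq 2n/(n+1)$, hence $\dim_F Bad(n)\leq 2n/(n+1)$.

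The main obstacle is the reduction: Fourier decay is not preserved under sharp restriction, so $\mu|_{Bad(n,1/m_0)}$ will not in general inherit the decay of $\mu$. The role of the Baire step is to guarantee that the badness constant can be made uniformly positive on a relatively open piece of $\supp\mu$; smoothing by $\chi$ then replaces the sharp restriction by a convolution, which carries the Fourier decay through intact and lets the uniform step do its work.
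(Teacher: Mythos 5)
Your proposal is correct and follows essentially the same route as the paper: a Baire-category-plus-smooth-cutoff reduction to a single $Bad(n,\epsilon)$ (identical to the paper's argument), followed by a lower bound on $\mu(\{x:\|qx\|\leq\delta\})$ of the form $c_1\delta^n - c_2\delta^{s/2}q^{-s/2}$ that contradicts badness when $\delta\asymp\epsilon q^{-1/n}$ and $s>2n/(n+1)$. The only difference is that you derive this counting lower bound directly by Poisson summation (modulo the harmless normalisation $\phi\leq 1$ needed for $F_\delta\leq\mathbf{1}$), whereas the paper invokes it as a black-box lattice-counting theorem from \cite{Y21}; the underlying estimate is the same.
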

\begin{rem}
This theorem tells us nothing non-trivial when $n=1.$ However, as long as $n\geq 2,$ we conclude that the set $Bad(n)$ is not Salem. 
\end{rem}

\section{Notations and Preliminaries}
We use the standard Vinogradov symbols $\ll,\gg,\asymp$ as well as the Bachman-Landau notations $O,o.$ 

For the proof of Theorem \ref{main1}, we need the following result of the first named author (\cite{H19}), which proves one half of Theorem \ref{main1}.
\begin{thm}\label{lower}
		Let $d,n\geq 1$ be integers. For each $\tau>d/n,$
	\[
	\dim_F E(\tau,d,n)\geq\frac{2d}{1+\tau}.
	\]
\end{thm}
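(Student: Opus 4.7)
The plan is to construct, for every $s < 2d/(1+\tau)$, a compactly supported Borel probability measure $\mu$ with $\supp\mu \subseteq E(\tau,d,n)$ and $|\hat\mu(\xi)| \ll |\xi|^{-s/2}$, following Kaufman's approach in the scalar case and adapting it to the matrix-vector setting. Fix an even $\phi \in C_c^\infty(\RR^n)$ with $\int\phi = 1$ and $\hat\phi$ compactly supported, and a cutoff $\psi \in C_c^\infty(\RR^{nd})$. For each scale $N \geq 1$, set
\[
f_N(x) = \psi(x)\sum_{|q|_\infty \asymp N}\sum_{r\in\ZZ^n}\phi\bigl(N^\tau(xq-r)\bigr),
\]
viewing $x$ as an $n\times d$ matrix, and let $\mu_N = f_N\,dx / \int f_N$. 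A Poisson summation computation shows $\int f_N \asymp N^{d-n\tau}$, so $\mu_N$ is a probability measure concentrated on
\[
F_N = \cbr{ x \in \RR^{nd} : \exists (q,r)\in\ZZ^d\times\ZZ^n,\ |q|_\infty\asymp N,\ |xq-r|_\infty\ll N^{-\tau}},
\]
a union of $O(N^d)$ affine $(nd-n)$-planes thickened in the $L^\infty$-sense by $\sim N^{-\tau}$. Choosing a sufficiently fast-growing sequence $N_1 < N_2 < \cdots$ and forming a normalised weak-$*$ limit of products $\prod_{k=1}^{K} f_{N_k}\,dx$, I would obtain $\mu$ whose support lies in $\bigcap_{K}\bigcup_{k\geq K}F_{N_k}\subseteq E(\tau,d,n)$ via a Borel--Cantelli argument.

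The Fourier estimate is the technical heart. Poisson summation in the $r$ variable, using $m\cdot(xq) = (m\otimes q)\cdot x$ where $m\otimes q\in\ZZ^{nd}$ is the rank-one tensor with $(i,j)$-entry $m_iq_j$, gives
\[
\hat{\mu_N}(\xi) \asymp N^{-d}\sum_{|q|_\infty\asymp N}\sum_{m\in\ZZ^n}\hat\phi(m/N^\tau)\,\hat\psi(\xi+m\otimes q).
\]
Schwartz decay of $\hat\phi$ restricts to $|m|_\infty\ll N^\tau$, so $|m\otimes q|_\infty\ll N^{1+\tau}$ and $\hat{\mu_N}(\xi)$ is negligible once $|\xi|_\infty \gg N^{1+\tau}$. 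For $|\xi|_\infty\ll N^{1+\tau}$, the rapid decay of $\hat\psi$ forces $m\otimes q \approx -\xi$; a divisor-type count then bounds the number of rank-one integer representations of a given matrix by $|\xi|^{o(1)}$, yielding $|\hat{\mu_N}(\xi)|\ll N^{-d}|\xi|^{o(1)}$.

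Assembling the product across scales, and arranging the growth of the $N_k$ so that at each frequency $\xi$ only the single scale $N_k$ with $N_k^{1+\tau}\asymp |\xi|$ contributes meaningfully to the convolution $\hat f_{N_1}*\cdots*\hat f_{N_K}$, one obtains $|\hat\mu(\xi)|\ll |\xi|^{-d/(1+\tau)+o(1)}$, which gives $\dim_F E(\tau,d,n)\geq 2d/(1+\tau)-\varepsilon$ for every $\varepsilon>0$. The main obstacle I anticipate is the divisor-style count of rank-one representations: when $\xi$ itself happens to be a rank-one integer matrix with small entries, pairs $(q,m)$ with $m\otimes q\approx -\xi$ proliferate through the gcd-structure of the rows and columns of $\xi$, and one must verify that the total contribution remains $|\xi|^{o(1)}$, exploiting the fact that $m$ and $q$ are each determined by $\xi$ up to a finite ambiguity. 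This is the step that genuinely lifts Kaufman's one-dimensional argument to the matrix setting, and it is where the hypothesis $\tau > d/n$ (forcing $N^{d-n\tau} \to 0$) enters decisively.
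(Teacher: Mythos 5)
The paper does not actually prove this theorem; it records it as Theorem~\ref{lower} and attributes it to the first author's earlier paper~\cite{H19}, which carries out exactly the kind of Kaufman-style construction you sketch: a smoothed, periodized counting function at each scale $N$, a product over a rapidly growing sequence of scales, and a Fourier estimate coming from Poisson summation plus arithmetic input on $q$. So your overall strategy is that of the cited proof, and the scaling bookkeeping ($\int f_N\asymp N^{d-n\tau}$, $|\hat{\mu}_N(\xi)|\ll N^{-d}|\xi|^{o(1)}$ on the band $|\xi|\asymp N^{1+\tau}$, giving exponent $d/(1+\tau)$) is correct.

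Two points need repair. First, your hypotheses on $\phi$ are contradictory: $\phi\in C_c^\infty(\RR^n)$ with $\hat{\phi}$ also compactly supported is impossible by the uncertainty principle. You must pick one. Kaufman-type arguments usually take $\hat{\phi}$ compactly supported (so the $m$-sum in the Poisson expansion is genuinely finite), but then $\phi$ has tails and the inclusion $\supp\mu\subseteq E(\tau,d,n)$ needs a separate localisation argument; alternatively, take $\phi$ compactly supported so the support inclusion is automatic, and control the infinite $m$-sum through the Schwartz decay of $\hat{\phi}$. Second, and more substantially, the $|\xi|^{o(1)}$ bound on near--rank-one representations $m\otimes q\approx\xi$ with $|q|_\infty\asymp N$, $m\neq 0$, is asserted but not proved, and you correctly identify it as the crux: it is the matrix-analogue replacement for Kaufman's ``restrict to prime $q$'' trick. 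A workable route is: fix $j_0$ with $|q_{j_0}|\asymp N$; then each $m_i$ is forced to the nearest integer to $\xi_{ij_0}/q_{j_0}$, hence $m$ is determined by $q_{j_0}$; if $m\neq 0$, pick $i_0$ with $m_{i_0}\neq 0$, whence $|\xi_{i_0 j_0}|\gtrsim N$ and $(m_{i_0},q_{j_0})$ is a divisor pair of one of $O(1)$ integers near $\xi_{i_0 j_0}$, giving $\ll|\xi|^{o(1)}$ choices of $q_{j_0}$, after which the remaining $q_j$ are pinned down by $m$ and $\xi$. This has to be carried out with the Schwartz tails of $\hat{\psi}$ in place and with bounds uniform in $N$, so that the convolution-across-scales step actually closes. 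Finally, a small misattribution: $\tau>d/n$ plays no role in the divisor bound; it is what makes $\int f_N\to 0$, i.e.\ forces the limit measure onto a null set, and it is also the threshold below which $E(\tau,d,n)$ has full measure and the statement is vacuous.
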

The proof of the above result in \cite{H19} is constructive and 
provides an explicit probability measure supported by $E(\tau,d,n)$ 
with the demanded Fourier decay property. 
To complete the proof of Theorem \ref{main1}, our task is to show 
that it is not possible to find other measures supported by $E(\tau,d,n)$ 
with faster Fourier decay rate. 
Likewise, to prove Theorem \ref{thm:bad}, we must show that there 
is no probability measure supported on (a compact subset of) $Bad(n)$ 
that satisfies $|\widehat{\mu}(\xi)| \ll |\xi|^{-s/2}$ with 
$s > \frac{2n}{n+1}$. 
To do so, we will use the two theorems below, which count points near lattices. 

For all $\delta\in (0,1)$, $Q \in\mathbb{N}$, 
define 
\[
A(\delta,Q)=\{x\in\mathbb{R}^n: \| Qx  \| < \delta\},
\]
where $\|x\|$ is the Euclidean distance from $x \in \mathbb{R}^n$ to the nearest point of the integer lattice $\mathbb{Z}^n$. 

The following theorem is due to the second author \cite[Theorem 4.1]{Y21}. 
\begin{thm}\label{Lattice Counting}
	Let $n \geq 1$ be an integer. 
	 Let $\delta \in (0,1),$ $Q \in\mathbb{N}.$ 
	 %
	 %
	 Let $\mu$ be a Borel probability measure on $[0,1]^n.$
	 Then 
	\begin{align*}
	\mu(A(\delta,Q)) \ll \delta^n\left( 1+O_{Q\to\infty}(\sum_{\xi \in \ZZ^n, Q|\xi, 0< |\xi| \leq 2Q/\delta} |\hat{\mu}(\xi)|)\right)
	\end{align*}
	and, for all $K > 0$ and $N \in \NN$, 
	\begin{align*}
	\mu(A(\delta,Q)) \gg \delta^n \left(1+O_{Q\to\infty}(\sum_{\xi \in \ZZ^n, Q|\xi, 0< |\xi| \leq KQ/\delta}  |\hat{\mu}(\xi)|)\right)+O_{K\to\infty}(K^{-N}).
	\end{align*}
	Moreover, the implied constant in the $O_{K\to\infty}(.)$ term depends on $N$ only, and all other implied constants are absolute. 
\end{thm}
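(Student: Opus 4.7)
The natural approach is Fourier analysis on the torus together with well-chosen smooth majorants and minorants of the indicator of a Euclidean ball. For $\delta<1/2$, one has $\mathbf{1}_{A(\delta,Q)}(x)=\chi_\delta(Qx\bmod\ZZ^n)$, where $\chi_\delta:\TT^n\to\{0,1\}$ is the indicator of the Euclidean $\delta$-ball about the origin in $\TT^n=\RR^n/\ZZ^n$, so that $\mu(A(\delta,Q))=\int\chi_\delta(Qx)\,d\mu(x)$. The plan is to sandwich $\chi_\delta$ between smooth majorants and minorants whose Fourier coefficients are sufficiently localised, expand them on $\TT^n$, compose with $y=Qx$, and integrate against $\mu$.

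Fixing $K>1$ and $N\in\NN$, I would construct smooth functions $F^{\pm}=F^{\pm}_{K,N}:\TT^n\to[0,\infty)$ with: (i) $F^-\le\chi_\delta\le F^+$; (ii) $\int_{\TT^n}(F^+-F^-)\ll_N\delta^n K^{-N}$; and (iii) the Fourier coefficients $\widehat{F^{\pm}}(k)$ are either supported in $|k|\le K/\delta$ or of faster-than-polynomial decay outside that range. Such $F^{\pm}$ can be produced by convolving slightly enlarged/deflated ball indicators with band-limited positive kernels: in dimension one this is the Beurling--Selberg construction, and in higher dimensions one invokes the multi-dimensional extremal-function constructions of Holt--Vaaler, iterated if necessary to upgrade the usual $O(K^{-1})$ error to arbitrary polynomial decay $O_N(K^{-N})$.

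Given such $F^{\pm}$, the Fourier expansion $F^{\pm}(y)=\sum_{k\in\ZZ^n}\widehat{F^{\pm}}(k)e^{2\pi i k\cdot y}$ yields
\[
\int F^{\pm}(Qx)\,d\mu(x)=\widehat{F^{\pm}}(0)+\sum_{k\in\ZZ^n\setminus\{0\}}\widehat{F^{\pm}}(k)\,\hat\mu(-Qk).
\]
The main term is $\widehat{F^{\pm}}(0)=\int F^{\pm}=c_n\delta^n+O_N(\delta^n K^{-N})$. Substituting $\xi=Qk$, the range $0<|k|\le K/\delta$ contributes $\ll\delta^n\sum_{Q\mid\xi,\,0<|\xi|\le KQ/\delta}|\hat\mu(\xi)|$ via the bound $|\widehat{F^{\pm}}(k)|\le\int F^{\pm}\ll\delta^n$, while the range $|k|>K/\delta$ contributes $O_N(K^{-N})$ via (iii) and the trivial estimate $|\hat\mu|\le 1$. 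For the upper bound, take $K=2$ and use $F^+$, absorbing the $O(K^{-N})$ residual into the main $\delta^n$ term; for the lower bound, use $F^-$ with $K$ left as a free parameter, leaving the residual as the stated additive error $O_{K\to\infty}(K^{-N})$ whose implied constant depends only on $N$.

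The main technical obstacle lies in (ii)--(iii): the classical single-convolution Beurling--Selberg/Holt--Vaaler construction yields only an $O(K^{-1})$ error, so to achieve arbitrary polynomial decay $O_N(K^{-N})$ one must either iterate the construction or use higher-order extremal functions, and additional care is required in dimension $n\ge 2$ to handle the boundary geometry of the Euclidean ball.
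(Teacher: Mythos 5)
Your strategy is in essence the one the paper uses: the authors do not prove Theorem~\ref{Lattice Counting} directly (it is quoted from \cite{Y21}), but they prove the more general Theorem~\ref{thm:linearform} in Section 5 by sandwiching the indicator of the $\delta$-neighbourhood between constant multiples of $\phi_{\delta_{\ast}}\ast L_q$ for two choices of a non-negative Schwartz bump $\phi$ with $\widehat{\phi}(0)=1$ --- one with $\widehat{\phi}$ compactly supported (upper bound, $K=2$, no tail term) and one with $\phi$ compactly supported (lower bound, with the Schwartz decay of $\widehat{\phi}$ producing the $K^{-N}$) --- and then expanding by Parseval. Your periodized majorant/minorant of the ball indicator on $\TT^n$ is exactly the $d=1$ specialization of this.

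The one genuine defect is your condition (ii). You ask for $\int(F^+-F^-)\ll_N\delta^nK^{-N}$ \emph{together with} Fourier concentration at frequencies $|k|\le K/\delta$, and you rightly flag this as the hard point; in fact it is impossible, not merely hard. For majorants band-limited to $B_{K/\delta}(0)$ the Beurling--Selberg/Holt--Vaaler extremal theory gives a matching lower bound $\int(F^+-\chi_\delta)\gg\delta^{n}K^{-1}$, and permitting rapidly decaying tails does not help: if the tail has $\ell^1$-mass $O(K^{-N})$ one can truncate and reduce to the band-limited case, so the $\delta^nK^{-1}$ loss persists. Iterating the convolution or invoking higher-order extremal functions only pushes the effective Fourier support out to $K^{N}/\delta$, and the intermediate frequencies then contribute far more than $K^{-N}$ because $\widehat{\chi_{B_\delta}}$ decays only like $\delta^n(\delta|k|)^{-(n+1)/2}$. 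Fortunately the theorem never requires (ii): both bounds carry absolute implied constants, so you only need $\int F^{\pm}\asymp\delta^n$, and the additive $O_{K\to\infty}(K^{-N})$ in the lower bound is not a refinement of the main term but simply the Fourier tail $\sum_{|k|>K/\delta}|\widehat{F^-}(k)|$ paired with the trivial bound $|\widehat{\mu}|\le1$. Concretely, $F^-(y)=\sum_{m\in\ZZ^n}\phi((y+m)/\delta)$ with $\phi\ge0$ Schwartz, supported in $B_1(0)$, $\widehat{\phi}(0)=1$, satisfies $F^-\le\|\phi\|_\infty\one_{B_\delta}$ on the torus, $\widehat{F^-}(0)=\delta^n$, and $\sum_{|k|>K/\delta}|\widehat{F^-}(k)|\ll_NK^{-N}$; the analogous band-limited choice with $K=2$ gives the majorant. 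With (ii) weakened to $\int F^{\pm}\asymp\delta^n$, your argument is correct and coincides with the paper's.
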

Theorem \ref{Lattice Counting} will be sufficient to prove Theorem \ref{thm:bad} and to prove Theorem \ref{main1} when $d=1$. 
To prove Theorem \ref{main1} in full generality, 
we will need the following analog of Theorem \ref{Lattice Counting} for linear forms. 

For all non-zero $q=(q_1,\dots,q_d) \in \ZZ^d$ and $\delta>0$, we define 
	\[
	L_{\delta,q}=\{x \in \mathbb{R}^d: \exists r\in\mathbb{Z},|q_1x_1+\dots+q_dx_d-r|<\delta\} 
	\]
	and the $n$-fold product 
	$$
	L_{\delta,q}^n = L_{\delta,q} \times \cdots \times L_{\delta,q} \subset \RR^{nd}. 
	$$

\begin{thm}\label{thm:linearform}
	Let $d,n \geq 1$ be integers. Let $q=(q_1,\dots,q_d) \in \ZZ^d$, $q \neq 0$.  Let $\delta>0$.  
	Let $\mu$ be a Borel probability measure on $[0,1]^{nd}.$ 
	Then 
	\[
	\mu(L_{\delta,q}^n) \ll \delta^n \left(1+  O(\sum_{t \in \ZZ^n, 0 < |t|_{\infty} \leq 2/\delta} |\hat{\mu}(t_1 q,\dots, t_n q)|) \right),
	\]
	and, for all $K > 0$ and $N \in \NN$, 
	\[
	\mu(L_{\delta,q}^n) \gg \delta^n \left(1  +  O(\sum_{t \in \ZZ^n, 0< |t|_{\infty} \leq K/\delta} |\hat{\mu}(t_1q,\dots,t_d q)|) \right)+O_{K\to\infty}(K^{-N}). 
	\]
	Moreover, the implied constant in the $O_{K\to\infty}(.)$ term depends on $N$ only, and all other implied constants are absolute. 
	Notation: In the sums above, 
	$$
	(t_1 q,\dots, t_n q) = (t_1 q_1, \ldots, t_1 q_d, \ldots, t_n q_1, \ldots, t_n q_d) \in \RR^{nd}. 
	$$
\end{thm}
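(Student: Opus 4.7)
I plan to imitate the proof of Theorem~\ref{Lattice Counting} (\cite[Theorem~4.1]{Y21}), taking $n$-fold products and replacing the map $x\mapsto Qx$ by the linear form $x\mapsto q\cdot x$. The two inputs are classical extremal functions on $\TT=\RR/\ZZ$. For the upper bound I use a band-limited Beurling--Selberg majorant $\Phi^+_\delta\geq \one_{\{\|\cdot\|<\delta\}}$ with $\hat\Phi^+_\delta(k)=0$ for $|k|>1/\delta$, $|\hat\Phi^+_\delta(k)|\ll\delta$ for all $k$, and $\hat\Phi^+_\delta(0)=2\delta(1+o(1))$. For the lower bound I use a Schwartz minorant $\Phi^-_\delta\leq \one_{\{\|\cdot\|<\delta\}}$: pick a non-negative $\phi\in C_c^\infty([-1/2,1/2])$ with $\phi\leq\one_{[-1,1]}$ and $\int\phi>0$, and set $\Phi^-_\delta(u)=\sum_{m\in\ZZ}\phi((u+m)/\delta)$. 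Its coefficients $\hat\Phi^-_\delta(k)=\delta\hat\phi(\delta k)$ are $O(\delta)$, with $\hat\Phi^-_\delta(0)\gg\delta$, and enjoy Schwartz decay $|\hat\Phi^-_\delta(k)|\ll_M \delta(1+\delta|k|)^{-M}$ for every $M$.

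Writing $x=(x^{(1)},\ldots,x^{(n)})\in\RR^{nd}$ with each $x^{(i)}\in\RR^d$, the indicator of $L_{\delta,q}^n$ factorises and so
\[
\int\prod_{i=1}^n\Phi^-_\delta(q\cdot x^{(i)})\,d\mu(x)\;\leq\;\mu(L_{\delta,q}^n)\;\leq\;\int\prod_{i=1}^n\Phi^+_\delta(q\cdot x^{(i)})\,d\mu(x).
\]
Expanding each factor in its Fourier series on $\TT$ and using the identity $\sum_{i=1}^n k_i(q\cdot x^{(i)})=(k_1q,\ldots,k_nq)\cdot x$, both integrals unfold to
\[
\sum_{t\in\ZZ^n}\Big(\prod_{i=1}^n \hat\Phi^\pm_\delta(t_i)\Big)\,\overline{\hat\mu(t_1q,\ldots,t_nq)},
\]
which is exactly how the sampling points $(t_1q,\ldots,t_nq)\in\ZZ^{nd}$ arise. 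The $t=0$ contribution equals $(\hat\Phi^\pm_\delta(0))^n\asymp\delta^n$.

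It remains to estimate the $t\neq 0$ part. For the upper bound, the band-limitation of $\Phi^+_\delta$ truncates the sum to $|t|_\infty\leq 1/\delta$, and the pointwise bound $|\hat\Phi^+_\delta(t_i)|\ll\delta$ in each factor then gives the stated inequality (with $2/\delta$ in place of $1/\delta$, which only enlarges the sum). For the lower bound the sum is infinite, so I split at $|t|_\infty=K/\delta$: the range $0<|t|_\infty\leq K/\delta$ gives $O\!\left(\delta^n\sum_{0<|t|_\infty\leq K/\delta}|\hat\mu(t_1q,\ldots,t_nq)|\right)$ as above, while for the tail $|t|_\infty>K/\delta$, bounding $|\hat\mu|\leq 1$ and splitting by the coordinate that exceeds $K/\delta$ yields
\[
\sum_{|t|_\infty>K/\delta}\prod_i|\hat\Phi^-_\delta(t_i)|\;\ll\; n\Big(\sum_{|k|>K/\delta}|\hat\Phi^-_\delta(k)|\Big)\Big(\sum_{k\in\ZZ}|\hat\Phi^-_\delta(k)|\Big)^{n-1}\;\ll_{n,M}\;K^{1-M},
\]
using the one-variable estimates $\sum_{k\in\ZZ}|\hat\Phi^-_\delta(k)|\ll_M 1/\delta$ and $\sum_{|k|>K/\delta}|\hat\Phi^-_\delta(k)|\ll_M K^{1-M}/\delta$ coming from Schwartz decay. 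Relabelling $M-1$ as $N$ produces the required $O_{K\to\infty}(K^{-N})$ term. The only subtle point is that the upper bound genuinely requires a band-limited majorant (a Schwartz majorant would leave an unabsorbable additive error of size $O_M(1)$ rather than $O(\delta^n)$), whereas for the lower bound the Schwartz minorant is both sufficient and naturally produces the $K^{-N}$ error.
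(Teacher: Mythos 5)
Your proof is correct in substance and takes a genuinely different route from the paper. The paper works intrinsically in $\RR^d$: it equips the union of hyperplanes $L_q=\bigcup_{r\in\ZZ}P_{q,r}$ with its natural surface measure, computes its Fourier coefficients on $\TT^d$, and then approximates $\one_{L_{\delta,q}}$ by the convolution $\phi_{\delta_\ast}\ast L_q$ (with $\delta_\ast=\delta/|q|$), choosing $\phi$ Schwartz with $\widehat{\phi}$ compactly supported for the upper bound and $\phi$ compactly supported for the lower bound, then applies Parseval. You instead observe that $\one_{L_{\delta,q}}(x)$ is really $\one_{\{\|q\cdot x\|<\delta\}}$, a function of the single scalar $q\cdot x$ on $\TT$, so the entire problem collapses to one dimension; you then sandwich the interval indicator between a Beurling--Selberg majorant (band-limited, forcing $T=0$) and a Schwartz minorant, with the product over $i$ recreating the sampling points $(t_1q,\dots,t_nq)$ through the identity $\sum_i t_i(q\cdot x^{(i)})=(t_1q,\dots,t_nq)\cdot x$. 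This is arguably the cleaner reduction: it makes visible why only frequencies in $\ZZ q$ survive, avoids introducing singular surface measures and the Ahlfors-regularity estimate \eqref{LqAD}, and the upper bound needs no ad hoc construction of a compactly-band-limited nonnegative bump since the Selberg majorant is exactly the classical extremal function for this job. Your closing remark, that a band-limited majorant is genuinely necessary on the upper-bound side because a Schwartz majorant's tail contributes an unabsorbable $O_M(1)$ rather than $O(\delta^n)$, is accurate and is the reason the paper also insists on $\widehat{\phi}$ having compact support there. The geometric approach in the paper, on the other hand, is more readily adaptable to settings where the level sets of the approximating function are not literal hyperplanes.

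One small computational slip: your one-variable estimates for the minorant carry a spurious factor of $1/\delta$. Since $\widehat{\Phi^-_\delta}(k)=\delta\,\widehat{\phi}(\delta k)$, the correct bounds are $\sum_{k\in\ZZ}|\widehat{\Phi^-_\delta}(k)|\ll 1$ (Riemann sum: $\delta\sum_k|\widehat{\phi}(\delta k)|\approx\int|\widehat{\phi}|$) and $\sum_{|k|>K/\delta}|\widehat{\Phi^-_\delta}(k)|\ll_M K^{1-M}$, with no $1/\delta$. Had you actually multiplied through with your stated estimates you would have landed on $K^{1-M}/\delta^n$ rather than the $K^{1-M}$ you quote; the final conclusion $\ll_{n,M}K^{1-M}$ is correct, but only after fixing these two bounds. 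Relabelling $M-1$ as $N$ then yields the stated $O_{K\to\infty}(K^{-N})$ error, matching the theorem.
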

Note that Theorem \ref{Lattice Counting} is (essentially) the $n=1$ case of Theorem \ref{thm:linearform}. 
It is possible to generalize Theorem \ref{thm:linearform} in various ways, but we do not pursue that here. 
We will prove Theorem \ref{thm:linearform} at the end of this paper.

\section{Well approximable vectors}
In this section, we prove Theorem \ref{main1}. 
We will split the proof 
into two parts. 
In the first part, we use Theorem \ref{Lattice Counting} to prove the Theorem \ref{main1} in the special case where $d=1$.  In this case, the underlying geometric idea is very easy to be picked.
In the second part, we use Theorem \ref{thm:linearform} to prove Theorem \ref{main1} in full generality.  The overall argument is not too much different than the $d=1$ case.

\begin{proof}[Proof of Theorem \ref{main1}, Part 1.] 
	Assume $d=1$ and  $n \geq 1$. 
	Let $\tau>1/n.$ 
	Seeking a contradiction, suppose that there exists a 
	$\mu \in \mathcal{P}(E(\tau,1,n)) $ such that $\dim_F \mu > 2/(1+\tau).$  
	This indicates that 
	\begin{align*}
	|\hat{\mu}(\xi)|\ll |\xi|^{-1/(1+\tau)-\epsilon}
	\end{align*}
	for some $\epsilon>0.$
 The support of $\mu$ is contained in $E(\tau,1,n) \cap [-M,M]^n$ for some $M>0$.  
Since $E(\tau,1,n) = E(\tau,1,n) + k$ for all $k \in \ZZ^n$ and since $\frac{1}{m} E(\tau,1,n) \subset E(\tau,1,n)$ for all $m \in \NN$, 
we can translate and scale $\mu$ to obtain a probability measure having support contained in $E(\tau,1,n) \cap [0,1]^d$ 
and obeying the same Fourier decay asymptotic. 
Thus, without loss of generality, we can assume $\supp(\mu) \subset [0,1]^n$. 	
	Let $Q \geq 1$. Let $\delta \in (0,1)$. 
	We apply Theorem \ref{Lattice Counting} to see that
	\[
	\mu(A(\delta,Q)) \ll \delta^n (1+O_{Q\to\infty}(\sum_{Q|\xi,\xi\neq 0,|\xi|\leq 2Q/\delta} |{\hat{\mu}(\xi)}|)).
	\]
	For the $O_{Q\to\infty}$ term, we have 
	(from the Fourier decay assumption), 
	\[
	\sum_{\xi \in \ZZ^n, Q|\xi, 0 < |\xi|\leq 2Q/\delta} |\hat{\mu}(\xi)|\ll Q^{-1/(1+\tau)-\epsilon} \left(\frac{1}{\delta}\right)^{n-\frac{1}{1+\tau}-\epsilon}.
	\]
	Now we choose $\delta$ to be
	\[
	\delta=\delta_Q= Q^{-\tau(\epsilon)},
	\]
	where $\tau(\epsilon) \in (1/n,\tau)$ is a number that will be determined later. 
	Observe that
	\[
	\delta_Q^n Q^{-1/(1+\tau)-\epsilon} \delta_Q^{-(n-1/(1+\tau)-\epsilon)} 
	= Q^{-\tau(\epsilon)/(1+\tau) - \epsilon \tau(\epsilon) -1/(1+\tau)-\epsilon}.
	\]
	We choose $\tau(\epsilon) \in (1/n,\tau)$ such that the exponent of $Q$ is
	\[
	-\tau(\epsilon)\frac{1}{1+\tau}-\epsilon\tau(\epsilon)-\frac{1}{1+\tau}-\epsilon<-1.
	\]
	This is possible because
	\[
	-\tau\frac{1}{1+\tau}-\epsilon\tau-\frac{1}{1+\tau}-\epsilon=-1-\epsilon-\epsilon\tau<-1.
	\]
	From here we see that 
	\[
	\mu(A(\delta,Q))\ll Q^{-\tau(\epsilon) n}+Q^{	-\tau(\epsilon)\frac{1}{1+\tau}-\epsilon\tau(\epsilon)-\frac{1}{1+\tau}-\epsilon}.
	\]
	Then we see that
	\[
	\sum_{Q \geq 1} \mu(A(\delta_Q,Q))<\infty.
	\]
	By the convergence part of the Borel-Cantelli lemma, we see that $\mu$-a.e point $x$ should satisfy
	\[
	\|Qx\| < Q^{-\tau(\epsilon)}
	\]
	for at most finitely many $Q$. 
	However, as the support of $\mu$ is contained in $E(\tau,1,n)$, we see that $\mu$-a.e. point $x$ is contained in $E(\tau,1,n)$, i.e., $\mu$-a.e point $x$ should satisfy 
	\[
	|Qx-r|_{\infty}<Q^{-\tau}
	\]
	for infinitely many $(Q,r) \in \mathbb{Z} \times \mathbb{Z}^n.$ 
	This implies that
	\[
	\|Qx\|\ll Q^{-\tau}
	\]
	for a sequence of increasing integers $Q\to\infty.$ 
	Then, since $\tau(\epsilon)<\tau$, it follows that 
	\[
	\|Qx\|< Q^{-\tau(\epsilon)}
	\]
	for infinitely many $Q$. 
	From this contradiction, we see that 
	\[
	\dim_F E(\tau,1,n) \leq \frac{2}{1+\tau}.
	\]
	Then, from Theorem \ref{lower}, we see that 
	\[
	\dim_F E(\tau,1,n)=\frac{2}{1+\tau}.
	\]
\end{proof}

\begin{proof}[Proof of Theorem \ref{main1}, part 2]
Assume $n,d \geq 1$ are integers and $\tau > d/n$. 
Seeking a contradiction, suppose there exists a $\mu \in \mathcal{P}(E(\tau,d,n))$ such that 
\[
|\hat{\mu}(\xi)|\ll |\xi|^{-s}
\]
for some $s > d/(1+\tau)$.  
The support of $\mu$ is contained in $E(\tau,d,1) \cap [-M,M]^d$ for some $M>0$.  
Since $E(\tau,d,1) = E(\tau,d,1) + k$ for all $k \in \ZZ^d$ and since $\frac{1}{m} E(\tau,d,1) \subset E(\tau,d,1)$ for all $m \in \NN$, 
we can translate and scale $\mu$ to obtain a probability measure having support contained in $E(\tau,d,1) \cap [0,1]^d$ 
and obeying the same Fourier decay asymptotic. 
Thus, without loss of generality, we can assume $\supp(\mu) \subset [0,1]^d$. 
Let $\delta \in (0,1)$. 
From Theorem \ref{thm:linearform}, we have
\[
\mu( L_{\delta,q}^n ) \ll \delta^n \left(1 +  O(\sum_{ t \in \ZZ^n, 0 < |t|_{\infty} < 2/\delta} |\hat{\mu}(t_1q, \dots, t_n q)| ) \right),
\]
Because of the Fourier decay assumption, the sum in $t$ is  
	\[
	\ll \sum_{ t \in \ZZ^n, 0 < |t|_{\infty} < 2/\delta} |(t_1q,\ldots,t_nq)|_{\infty}^{-s} =  |q|_{\infty}^{-s} \sum_{ t \in \ZZ^n, 0 < |t|_{\infty} < 2/\delta} |t|_{\infty}^{-s} \ll |q|_{\infty}^{-s} \delta^{s-n}. 
	\]
Since $\tau > d/n$ and $s > d/(1+\tau)$, 
we can choose $\tau'$ close enough to $\tau$ that $n\tau > n\tau' > d$ 
and $s(1+\tau) > s(1+\tau') > d$. 
Now set 
$$
\delta = \delta_q =  |q|_{\infty}^{-\tau'}. 
$$  
Then 
$$
\sum_{q \in \ZZ^d} \mu( L_{\delta,q}^n ) 
\ll 
\sum_{q \in \ZZ^d}  (|q|_{\infty}^{-n \tau'} + |q|_{\infty}^{-s(1 +\tau')} 
< 
\infty. 
$$
By the convergence part of Borel-Cantelli lemma, for $\mu$-a.e. $x \in \RR^{nd}$, 
we have $x \in L_{\delta,q}^n$ for at most finitely many $q \in \ZZ^d$. 
But 
$$
E(\tau',d,n) = \cbr{x \in \RR^{nd} : x \in L_{\delta,q}^n \text{ for infintely many } q \in \ZZ^d}
$$
Thus $x \notin E(\tau',d,n)$ for  $\mu$-a.e. $x \in \RR^{nd}$. 
Hence $\mu$ cannot be supported on $E(\tau',d,n).$ 
Since $\tau'<\tau$, we have $E(\tau,d,n) \subseteq E(\tau',d,n)$, 
and so $\mu$ cannot be supported on $E(\tau,d,n)$ either.  
This contradiction shows that 
\[
\dim_F E(\tau,d,n) \leq \frac{2d}{1+\tau}. 
\]
Then, from Theorem \ref{lower}, we see that 
\[
\dim_F E(\tau,d,n) = \frac{2d}{1+\tau}.
\]
\end{proof}

\section{Badly approximable vectors}\label{Bad}
In this section, we prove Theorem \ref{thm:bad}. First, we provide a weaker result.
\begin{lma}\label{main2}
	Let $n\geq 1$ and let $\epsilon>0.$ We have
	\[
	\dim_F Bad(n,\epsilon)\leq \frac{2n}{n+1}.
	\]
\end{lma}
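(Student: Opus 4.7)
The plan is to mimic Part 1 of the proof of Theorem \ref{main1} but to use the lower bound in Theorem \ref{Lattice Counting} instead of the upper bound; the Borel--Cantelli step is also replaced by a direct contradiction at a single scale $Q$. I argue by contradiction: suppose there exists $\mu \in \mathcal{P}(Bad(n,\epsilon))$ with $|\hat{\mu}(\xi)| \ll |\xi|^{-s/2}$ for some $s > 2n/(n+1)$. As a preliminary normalization, $Bad(n,\epsilon)$ is invariant under $\ZZ^n$-translation (since $\|q(x+k)\| = \|qx\|$ when $k \in \ZZ^n$), and $(1/N)Bad(n,\epsilon) \subseteq Bad(n, \epsilon/N)$ for each $N \in \NN$ --- the latter because for $y = x/N$ one has $\|qy\| \geq \|qx\|/N$, by using $Nk$ as an integer competitor for $qx$ when $k \in \ZZ^n$ is nearest to $qy$. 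So, at the cost of shrinking $\epsilon$, I may assume $\supp \mu \subseteq [0,1]^n$.

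The geometric input is that $\supp \mu \subseteq Bad(n,\epsilon)$ is \emph{disjoint} from $A(\epsilon Q^{-1/n}, Q)$ for every $Q \in \NN$, hence $\mu(A(\epsilon Q^{-1/n}, Q)) = 0$. I will contradict this with a strictly positive lower bound from Theorem \ref{Lattice Counting} applied at $\delta_Q := \epsilon Q^{-1/n}$. The main term is $\delta_Q^n = \epsilon^n Q^{-1}$, while the error sum, after the substitution $\xi = Q\eta$ and the Fourier decay hypothesis, is controlled by the elementary tail bound $\sum_{0 < |\eta| \leq R} |\eta|^{-s/2} \ll R^{n - s/2}$ (valid since $s \leq n$ forces $s/2 < n$). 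This yields
\[
\sum_{\xi \in \ZZ^n,\, Q \mid \xi,\, 0 < |\xi| \leq KQ/\delta_Q} |\hat{\mu}(\xi)| \ll_{\epsilon} K^{n-s/2}\, Q^{1 - s(n+1)/(2n)}.
\]
Because $s > 2n/(n+1)$, the exponent of $Q$ is strictly negative, so for each fixed $K$ this error tends to $0$ as $Q \to \infty$.

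The final step is to balance the three parameters $K, Q, N$. Theorem \ref{Lattice Counting} reads
\[
\mu(A(\delta_Q, Q)) \gg \epsilon^n Q^{-1}\bigl(1 + o_{Q \to \infty}(1)\bigr) - C_N K^{-N},
\]
and I need the right-hand side to be positive. Setting $\alpha := s(n+1)/(2n) - 1 > 0$, the ``$o(1)$'' drops below $1/2$ once $Q \gg K^{(n-s/2)/\alpha}$, while $\epsilon^n Q^{-1}/2$ exceeds $C_N K^{-N}$ provided $Q \ll \epsilon^n K^N/(2 C_N)$. Choosing $N > (n-s/2)/\alpha$ makes these two constraints simultaneously satisfiable for all sufficiently large $K$; any $Q$ in the resulting window gives $\mu(A(\delta_Q, Q)) > 0$, contradicting the disjointness and thereby yielding $\dim_F Bad(n,\epsilon) \leq 2n/(n+1)$. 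I expect the main obstacle to be precisely this two-parameter juggling: because the $O_{K \to \infty}(K^{-N})$ term in Theorem \ref{Lattice Counting} is additive rather than multiplicative, one has to check that the $Q$-window is non-empty uniformly in all hidden constants, rather than simply summing a convergent series as in the Borel--Cantelli argument used for Theorem \ref{main1}.
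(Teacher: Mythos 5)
Your proof is correct and follows essentially the same route as the paper: a contradiction via the lower bound in Theorem \ref{Lattice Counting} at scale $\delta_Q \asymp Q^{-1/n}$, with the same estimate of the Fourier tail and the same three-way balancing of $\delta$, $K$, $N$. The only differences are cosmetic --- you take $\delta_Q = \epsilon Q^{-1/n}$ exactly so that $\mu(A(\delta_Q,Q))=0$ by disjointness, and you fix $K$ large and locate a window of admissible $Q$, whereas the paper takes the slightly smaller $\delta_Q = Q^{-(1+\epsilon')/n}$, sets $K=Q^{\rho'}$, and contradicts the badness inequality at a point of $A(\delta_Q,Q)\cap\supp\mu$; both versions are valid.
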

\begin{proof}
	Let $B=Bad(n,\epsilon).$ Suppose that $\dim_F B>2n/(n+1).$ Then there is a $\mu\in\mathcal{P}(B)$ with
	\[
	\dim_{F} \mu>2n/(n+1).
	\]
	Thus we have
	\[
	|\hat{\mu}(k)|\ll |k|^{-n/(n+1)-\epsilon'}
	\]
	for some $\epsilon'>0.$ Then we apply Theorem \ref{Lattice Counting}. This time we use the lower bound. We have
		\[
	\mu(A(\delta,Q))\geq c_2\delta^n \left(1+O_{Q\to\infty}(\sum_{\xi \in \ZZ^n, Q|\xi, 0< |\xi| \leq KQ/\delta}  |\hat{\mu}(\xi)|)\right)+O_{K\to\infty}(K^{-N}),
	\]
	where $\delta,Q,K,N$ are numbers which will be chosen later.  First, we check the $O_{Q\to\infty}$ part as in the proof of Theorem \ref{main1},
	\[
	\sum_{\xi \in \ZZ^n, Q|\xi, 0< |\xi| \leq KQ/\delta}  |{\hat{\mu}(\xi)}|\ll Q^{-n/(n+1)-\epsilon'}\left(\frac{K}{\delta}\right)^{n-\frac{n}{n+1}-\epsilon'}.
	\]
	We now choose $\delta=\delta_Q=Q^{-\frac{1+\epsilon'}{n}}.$ We see that the right-hand side above is
	\[
	K^{n-\frac{n}{n+1}-\epsilon'} Q^{-\epsilon'+\frac{\epsilon' n}{n+1}-\frac{\epsilon'}{n}-\frac{{\epsilon'}^2}{n}}.
	\]
	Now we want to determine $K.$ We want that $K$ is not very large so that the right-hand side above is $\ll Q^{-\rho}$ for some $\rho>0.$ This can be achieved by choosing
	\[
	K=K_Q=Q^{\rho'}
	\]
	with a small number $\rho'=\rho'(\epsilon',n)>0.$ 
	After determining $K=K_Q,$ we now choose a large number $N$ such that
	\[
	K^{-N}=o(\delta^n_Q).
	\]
	Again, this can be achieved by making $N=N(\epsilon',n)$ large enough. Then we see that, for large enough $Q,$
	\[
\mu(A(\delta,Q))\gg \delta^n_Q>0.
	\]
	This implies that $B\cap A(\delta_Q,Q)\neq\emptyset$ for all large enough $Q.$ This means that there are points $x_Q$ in $B$ with
	\begin{align}\label{Q}
	\|Qx_Q\| <  Q^{-\epsilon'-1/n}    
	\end{align}
	for all large enough $Q.$ However, since $x_Q\in B,$ we must have
	\begin{align}\label{QQ}
	\|Qx_Q\|  \geq \epsilon Q^{-1/n}.   
	\end{align}
	We see that \eqref{Q} and \eqref{QQ} are not compatible as long as $Q$ is large enough. This contradiction shows that
	\[
	\dim_F B\leq 2n/(n+1). 
	\]
	This is what we want to prove.
\end{proof}

Now we prove Theorem \ref{thm:bad}.

	\begin{proof}[Proof of Theorem \ref{thm:bad}]
		Let $\mu \in \mathcal{P}(\text{Bad}(n))$ be arbitrary.  
		%
		%
		By Lemma \ref{main2}, it will suffice to find some $\epsilon > 0$ and some $\nu \in \mathcal{P}( \text{Bad}( n , \epsilon))$ 
		such that $\dim_F \mu \leq \dim_F \nu$. 
		Note that $\|z\| \geq c$ if and only if $|z-r| \geq c$ for all $r \in \ZZ$. 
		Thus  
		$$
		\text{Bad}(n,\epsilon) 
		= \bigcap_{q \in \ZZ} \bigcap_{r \in \ZZ^n} \cbr{x \in \RR^n : | qx -r | \geq \epsilon |q|^{-1/n} }.  
		$$
		Hence $\text{Bad}(n,\epsilon)$ is closed. 
		Note that 
		$$
		\text{Bad}(n) = \bigcup_{\epsilon \in \QQ, \epsilon > 0} \text{Bad} (n,\epsilon). 
		$$
		Since $\supp(\mu) \subset \text{Bad}(n)$, we have 
		$$
		\supp(\mu) = \bigcup_{\epsilon \in \QQ, \epsilon > 0} (\supp(\mu) \cap \text{Bad}(n,\epsilon)). 
		$$
		Since $\supp(\mu)$ is a closed subset of the complete metric space $\RR^n$, 
		$\supp(\mu)$ is also a complete metric space. 
		Thus the Baire category theorem implies 
		$\supp(\mu)$ is not the countable union 
		of nowhere dense subsets of $\supp(\mu)$. 
		So there is some 
		positive $\epsilon \in \QQ$ 
		such that 
		the closure of $\supp(\mu) \cap \text{Bad}(n,\epsilon)$ in $\supp(\mu)$ 
		has non-empty interior in $\supp(\mu)$. 
		But, since $\text{Bad}(n,\epsilon)$ is closed in $\RR^n$, 
		$\supp(\mu) \cap \text{Bad}(n,\epsilon)$ is closed in $\supp(\mu)$. 
		So the closure of $\supp(\mu) \cap \text{Bad}(n,\epsilon)$ in $\supp(\mu)$ is just $\supp(\mu) \cap \text{Bad}(n,\epsilon)$ itself. 
		Thus $\supp(\mu) \cap \text{Bad}(n,\epsilon))$ 
		has non-empty interior in $\supp(\mu)$. 
		Therefore there is an open set $U$ in $\RR^n$ such that 
		$$
		\emptyset \neq \supp(\mu) \cap U  
		\subset \supp(\mu) \cap \text{Bad}(n,\epsilon).  
		$$
		Since $\emptyset \neq \supp(\mu) \cap U$ and $U$ is open,  
		the definition of $\supp(\mu)$ implies $\mu(\supp(\mu) \cap U)  > 0$. 
		By writing $U$ as a countable union of closed balls, 
		it follows that there exists a closed ball $B \subset U$ such that 
		$\mu(\supp(\mu) \cap B)  > 0$. 
		Since $\supp(\mu) \cap B$ is compact and contained in the open set $U$, 
		we can choose a non-negative compactly supported 
		$C^{\infty}$ function $f$ such that 
		$f=1$ on $\supp(\mu) \cap B$ 
		and $\supp(f) \subset U$. 
		Thus 
		$$
		0 < \mu(\supp(\mu) \cap B) \leq \int f d\mu. 
		$$
		%
		%
		Define a probability measure $\nu$ by $d\nu = c f d\mu$, where $c = (\int f d\mu)^{-1}$. 
		Then 
		$$
		\supp(\nu) \subset \supp(\mu) \cap U \subset \supp(\mu) \cap \text{Bad}(n,\epsilon) \subset \text{Bad}(n,\epsilon). 
		$$
		So $\nu \in \mathcal{P}( \text{Bad}(n,\epsilon)) )$. 
		It remains to show that $\dim_F \mu \leq \dim_F \nu$. 
		Note that 
		$$
		c^{-1}|\widehat{\nu}(\xi)| 
		= |(\widehat{f} \ast \widehat{\mu})(\xi)| 
		\leq \int_{|\xi - \eta| \geq \frac{1}{2} |\xi|}  |\widehat{f}(\eta)| |\widehat{\mu}(\xi-\eta)| d\eta
		+ 
		\int_{|\eta| \geq \frac{1}{2} |\xi|}  |\widehat{f}(\eta)| |\widehat{\mu}(\xi-\eta)| d\eta. 
		$$
		Note also that, since $f$ is compactly supported and $C^{\infty}$, we have $|\widehat{f}(\eta)| \ll |\eta|^{-N}$ for any fixed $N$.   
		It follows easily that, 
		for each fixed $s \in [0,n]$, 
		if $|\widehat{\mu}(\xi)| \ll |\xi|^{-s/2}$, then $|\widehat{\nu}(\xi)| \ll |\xi|^{-s/2}$. 
		Thus $\dim_F \mu \leq \dim_F \nu$. 
\end{proof}

\begin{rem*}
After noting that $\text{Bad}(n) = \bigcup_{\epsilon \in \QQ, \epsilon > 0} \text{Bad} (n,\epsilon)$ and each $\text{Bad} (n,\epsilon)$ is closed, 
the rest of the proof is essentially a verification that Fourier dimension is countably stable for closed sets. 
\end{rem*}

\section{Proof of Theorem \ref{thm:linearform}}

In this section, we prove Theorem \ref{thm:linearform}. 
The key idea is to compute the Fourier coefficients of linear forms 
(linear subspaces of torus viewed as $[0,1]^d$) together with thin 
neighbourhoods around them. Hinted by Pontryagin duality, 
the result essentially says that Fourier transform of linear forms 
are basically supported on the corresponding 'dual spaces'. 

Let $d,n \geq 1$ be integers. Let $q = (q_1,\ldots,q_d) \in \ZZ^d, q \neq 0$. 
Let $\delta > 0$. We start with some preparation for the main argument. 

Define $\delta_{\ast} = \delta/ |q|$.  
Recall that 
$$
L_{\delta,q} = \cbr{x \in \RR^d : \exists r \in \ZZ : |q_1 x_1 + \cdots q_d x_d - r| < \delta }. 
$$
For each $r \in \RR$, define the plane 
$$
P_{q,r} = \cbr{x \in \RR^d : q_1 x_1 + \cdots q_d x_d - r = 0 },
$$
and the union of planes 
$$
P_{\delta,q,r} = \cbr{x \in \RR^d : |q_1 x_1 + \cdots q_d x_d - r| < \delta }. 
$$
Since the Euclidean distance between any two planes $P_{q,r'}$ and $P_{q,r''}$ is $|r' - r''| / |q|$, 
we see that 
that $P_{\delta,q,r}$ is the $\delta_{\ast}$-thickening of the plane $P_{q,r}$, i.e.,  
$$
P_{\delta,q,r} = P_{q,r} + B_{\delta_{\ast}}(0),
$$ 
where $B_{\delta_{\ast}}(0)$ is the Euclidean metric ball of radius $\delta_{\ast}$ around the origin.
Define 
$$
L_q = \bigcup_{r \in \ZZ} P_{q,r} 
$$
Note $L_q$ is a union of planes that are orthogonal to $q$ and spaced a distance of $1/|q|$ apart. 
Then 
$$
L_{\delta,q} = \bigcup_{r \in \ZZ} P_{\delta,q,r} = L_q + B_{\delta_{\ast}}(0) 
$$
Thus $L_{\delta,q}$ is the union of the $\delta_{\ast}$-thickenings of the planes composing $L_q$. 

In an abuse of notation, we use $P_{q,r}$ to denote 
the surface measure on the plane $P_{q,r}$.  
Similarly, we use $L_q$ to denote the surface measure on the union of planes $L_q$. 
Note 
$$
L_q = \sum_{r \in \ZZ} P_{q,r}. 
$$

If we assume (without loss of generality) that $q_1 \neq 0$, 
then 
the measure $P_{q,r}$ is given by 
\begin{align*}
\int_{\RR^d} f(x) dP_{q,r}(x) 
= \int_{\RR^{d-1}} f(g(x_2,\ldots,x_d)) \frac{1}{|q_1|} |q| 
d x_2 \cdots dx_d 
\end{align*}
where 
$$
g(x_2,\ldots,x_d) = (q_1^{-1}(r - q_2 x_2 - \cdots - q_d x_d,x_2,\ldots,x_d)
$$ 
is the parameterization of $P_{q,r}$.  
%
By a straightforward calculation, we have 
$$L_q([0,1]^d) = |q|.$$  
%
%

Since the measure $L_q$ is (a multiple of) 
the restriction of the $(d-1)$-Hausdorff measure on $\RR^d$ to the set $L_q$, 
we have the following property: 
There are constants $a,b>0$ (independent of $q$) such that 
\begin{align}\label{LqAD}
a\epsilon^{d-1} \leq L_q(B_{\epsilon}(x)) \leq b\epsilon^{d-1} 
\end{align}
for all $0 < \epsilon < 1$ and all $x \in L_q$. 

Since $q \in \ZZ^d$, the sets $L_q$  
and $L_{\delta,q}$ are $\ZZ^d$-periodic, 
i.e., they are invariant under translation by elements of $\ZZ^d$. 
The measure $L_q$  and the indicator function $\one_{L_{\delta,q}}$ are also  $\ZZ^d$-periodic. 

For each $k \in \ZZ^d$, the Fourier coefficients of $L_q$ are given by the following formula: 
\begin{align} 
\label{fourier Lq} 
\widehat{L_q}(k) =
\left\{
\begin{array}{ll}
|q| & \text{if  $k \in \ZZ q$ }    \\
0 & \text{if  $k \notin \ZZ q$ }
\end{array}
\right.
\end{align}
Indeed, 
since $q \cdot x \in \ZZ$ for all $x \in L_q$, we have, for all $t \in \ZZ$,  
$$
\widehat{L_q}(tq) = \int_{[0,1]^d} \exp(-2 \pi i tq \cdot x) dL_q(x) 
= L_q([0,1]^d) = |q|. 
$$ 
This proves the first case. 
For the second case, we make a coordinate change carried by a matrix $A \in SL_d(\ZZ)$ so that 
$q=(q_1,q_2,\ldots,q_d)$ is transformed into $q' = (p,0,\ldots,0)$, 
where $p=\gcd(q_1,\ldots,q_d)$. 
We are thus reduced to verifying that $\widehat{L_{q'}}(k)=0$ for $k \notin \ZZ q'$, which is straightfoward. 


We are now ready for the main argument in the proof. 

Let $\mu$ be any Borel probability measure on $\RR^{nd}$ with support contained in $[0,1]^{nd}.$ 
For $z \in \RR^{nd}$, we write $z = (z^{(1)},\ldots,z^{(n)})$, where $z^{(i)} \in \RR^d$. 
Note that 
\begin{align}\label{indicator}
\mu(L_{\delta,q}^n) = \int_{[0,1]^{nd}} \prod_{i=1}^{n} \one_{L_{\delta,q}}(x^{(i)})  d\mu(x) 
\end{align}

Let $\phi$ be a non-negative Schwartz function on $\RR^d$ with $\widehat{\phi}(0)=1$. 
For $x \in \RR^d$, define 
$
\phi_{\delta_{\ast}}(x) 
= 
\phi(x/\delta_{\ast}).   
$
Note that 
$\widehat{\phi_{\delta_{\ast}}}(\xi) = \delta_{\ast}^d \widehat{\phi}(\delta_{\ast} \xi)$ 
for all $\xi \in \RR^d$. 
Note also that $\phi_{\delta_{\ast}} \ast L_q$ is a smooth $\ZZ^d$-periodic function on $\RR^d$. 

Our ultimate goal is to bound the expression in \eqref{indicator}. 
To do so, 
we will approximate $\one_{L_{\delta,q}}$ 
by $\phi_{\delta_{\ast}} \ast L_q$. 
With this in mind, we first consider the integral in \eqref{indicator} with $\one_{L_{\delta,q}}$ replaced by $\phi_{\delta_{\ast}} \ast L_q$. 

Let $K > 0$ be arbitrary. 
By Parseval's theorem and \eqref{fourier Lq}, we have 
\begin{align}
\label{parseval} 
\int_{[0,1]^{nd}} \prod_{i=1}^{n} \phi_{\delta_{\ast}} \ast L_q (x^{(i)}) d\mu(x)
&= 
\sum_{k \in \ZZ^{nd}} \overline{ \widehat{\mu}(k) } \prod_{i=1}^{n}  \widehat{L_q}(k^{(i)}) \widehat{\phi_{\delta_{\ast}}}(k^{(i)})) 
\\
\notag 
&=
\delta_{\ast}^{nd} |q|^{n} \sum_{t \in \ZZ^n} \overline{ \widehat{\mu}(t_1q,\ldots,t_n q) } \prod_{i=1}^{n} \widehat{\phi}(\delta_{\ast} t_i q) 
\\ 
\notag
&= \delta_{\ast}^{n(d-1)} \delta^{n} (1+ S + T), 
\end{align}
where 
\begin{align*}
S = \sum_{t \in \ZZ^n, \, 0 < |t|_{\infty} < K/\delta } \overline{ \widehat{\mu}(t_1q,\ldots,t_n q) } \prod_{i=1}^{n} \widehat{\phi}(\delta_{\ast} t_i q), \\
T = \sum_{t \in \ZZ^n, \, |t|_{\infty} \geq K/\delta } \overline{ \widehat{\mu}(t_1q,\ldots,t_n q) } \prod_{i=1}^{n} \widehat{\phi}(\delta_{\ast} t_i q).   
\end{align*}

Since $|\widehat{\phi}| \leq \widehat{\phi}(0)=1$, we have 
\begin{align}\label{S bound}
|S| \leq \sum_{t \in \ZZ^n, \, 0 < |t|_{\infty} < K/\delta } |\widehat{\mu}(t_1q,\ldots,t_n q) |. 
\end{align}
Since $|\widehat{\mu}| \leq \widehat{\mu}(0)=1$ 
and since $\widehat{\phi}$ is a Schwartz function, we have, for every $N \in \NN$, 
\begin{align}\label{T bound}
|T| 
&\ll 
\sum_{t \in \ZZ^n, \, |t|_{\infty} \geq K/\delta } \prod_{i=1}^{n} (1+ \delta_{\ast} |q| |t_i| |)^{-N-n} 
\\
\notag 
&\ll 
\sum_{t \in \ZZ^n, \, |t|_{\infty} \geq K/\delta } (\delta_{\ast} |q| |t|_{\infty} )^{-N-n} 
\ll
( \delta_{\ast} |q| )^{-n} K^{-N}. 
\end{align}
The implied constants here depend only on $\phi$ and $N$.

To prove the upper bound in Theorem \ref{thm:linearform}, we need to make a more specific choice for $\phi$. 
To prove the lower bound, we need to make a different specific choice for $\phi$. 

We start with the upper bound in Theorem \ref{thm:linearform}. 
We choose $\phi$ as above, but with the additional properties that 
$m(\phi) := \min\cbr{\phi(x) : x \in B_2(0)}$ is positive  
and $\widehat{\phi}=0$ outside $B_K(0)$. 
For all $x \in L_{\delta,q} = L_{q} + B_{\delta_{\ast}}$, there exists $z \in L_q$ with $|x-z|_2 < \delta_{\ast}$, and hence 
$$
\phi_{\delta_{\ast}}(x-y) \geq m(\phi) \one_{B_{2\delta_{\ast}}(x)}(y) \geq m(\phi) \one_{B_{\delta_{\ast}}(z)}(y) 
$$
for all $y \in \RR^d$. 
From this and \eqref{LqAD}, it follows that 
\begin{align}\label{approx upper}
\phi' \ast L_q(x) \geq m(\phi) a \delta_{\ast}^{d-1} \one_{L_{\delta,q}}(x)
\end{align}
for all $x \in \RR^d$. 
Since $m(\phi) > 0$, combining this with \eqref{indicator} and \eqref{parseval} yields 
$$
\mu(L_{\delta,q}^n) \leq (m(\phi) a)^{-n} \delta^{n}(1+S+T). 
$$
Since $\widehat{\phi}=0$ outside $B_K(0)$, we have $T = 0$.  
Setting $K = 2$ and appealing to \eqref{S bound} completes the proof of the upper bound in Theorem \ref{thm:linearform}. 

Now we prove the lower bound in Theorem \ref{thm:linearform}. 
As above, we choose $\phi$ to be a non-negative Schwartz function on $\RR^d$ with $\widehat{\phi}(0)=1$. 
But now we require that $\phi = 0$ outside $B_1(0)$.  
For each $x$ in 
$L_{\delta,q} = L_q + B_{\delta_{\ast}}(0)$,  
there is a $z \in L$ such that $|x-z|_2 < \delta_{\ast}$, 
and hence, for each $y \in L_q$, 
$$
\phi_{\delta_{\ast}}(x-y) 
\leq 
|\phi|_{\infty} \int 1_{B_{\delta_{\ast}}(x)}(y) 
\leq 
|\phi|_{\infty} \int 1_{B_{2\delta_{\ast}}(z)}(y). 
$$
From this and \eqref{LqAD}, it follows that, for all $x$ in 
$L_{\delta,q} = L_q + B_{\delta_{\ast}}(0)$, we have 
$$
\phi_{\delta_{\ast}} \ast L_q(x) \leq |\phi |_{\infty} b (2\delta_{\ast})^{d-1}.  
$$ 
On the other hand, for all $x$ not in 
$L_{\delta,q} = L_q + B_{\delta_{\ast}}(0)$, 
we have 
$\phi_{\delta_{\ast}}(x-y) = 0$ for all $y \in L_q$, 
and so $\phi_{\delta_{\ast}} \ast L_q(x) = 0$. 
Therefore 
\begin{align*}\label{phi lower bound 1}
\phi_{\delta_{\ast}} \ast L_q(x) \leq |\phi|_{\infty} b (2 \delta_{\ast} )^{d-1} \one_{L_{\delta,q}}(x)
\end{align*}
for all $x \in \RR^d$. 
Combining this with \eqref{indicator} and \eqref{parseval} yields 
$$
\mu(L_{\delta,q}^n) \geq 2^{-n(d-1)} (|\phi |_{\infty} b)^{-n} \delta^{n} (1+S+T). 
$$
Applying both \eqref{S bound} and \eqref{T bound} yields the 
lower bound in Theorem \ref{thm:linearform}.

\section{Acknowledgement}
Han Yu was financially supported by the University of Cambridge and the Corpus Christi College, Cambridge. Han Yu has received funding from the European Research Council (ERC) under the European Union’s Horizon 2020 research and innovation programme (grant agreement No. 803711). 


\bibliographystyle{amsplain}

\end{document}